\newcommand{\arr}{\longrightarrow}
\newcommand{\imply}{\rightarrow}
\newcommand{\C}{\mathbb{C}}
\newcommand{\sub}{\textbf{Sub}}
\newcommand{\lb}{\llbracket}
\newcommand{\rb}{\rrbracket}
\newcommand{\ee}{\mathbb{\varepsilon}}
\mathchardef\colon="303A  
\mathchardef\gt="313E  
\mathchardef\lt="313C  
\theoremstyle{definition}
\newtheorem{deff}{Definition}[section]
\newtheorem{prop}[deff]{Proposition}
\newtheorem{exa}[deff]{Example}
\newtheorem{lem}[deff]{Lemma}
\date{}
\begin{document}
\title{A categorical semantic for the Typed Epsilon Calculus}
\author{Fabio Pasquali}
\maketitle

\begin{abstract} We show that every boolean category satisfying AC provides a categorical semantic of the typed Epsilon calculus.
\end{abstract}

\section*{Introduction}\label{sec0}
Hilbert's Epsilon calculus is an extension of the Hilbert system for classical first order predicate logic by a term-forming operator. More specifically, for every well formed formula $\psi(x)$ there exists a term $\ee_{\psi}$, in which $x$ is not free, whose logical behavior is governed by the following axiom schema, also called Hilbert's transifite axiom$$\psi (x) \imply \psi [\ee_{\psi}/x]$$
Epsilon terms, i.e. terms of the form $\ee_{\psi}$, can be intuitively though as witnesses of the fact that $\psi (x)$ is true. This intuition can be made more precise if one consider that the transfinite axiom schema can be equivalently replaced by the following $$\exists x. \psi(x)\imply\psi [\ee_\psi/x]$$Moreover one can add the following extensionality axiom$$\forall x.(\psi(x)\leftrightarrow\phi(x))\rightarrow\ee_\psi=\ee_\phi$$which asserts that equivalent formulas have the same witness. When the extensionality axiom is considered, the calculus is called extensional Hilbert's Epsilon calculus.
A general account of the (extensional) Epsilon calculus can be found in \cite{HB1, HB2}.\\\\ In this paper we present a typed version of the Epsilon calculus and we show that it admits a simple categorical semantic.\\\\ In section \ref{sec2} we present the rules of the typed Epsilon calculus which we will be concerned with for the rest of the paper. Rules are given in the style of natural deduction, as it is customary in type-theoretic literature. In section \ref{sec1} we recall some ordinary facts concerning category theory. The section might be ignored by readers who are familiar with the notion of the Axiom of Choice, internal to a category, and with the notion of Boolean category. In section \ref{sec3} we recall the categorical semantic for first order calculi over a many typed signatures by mean of doctrines. We closely follow the approach given by Pitts in \cite{PittsCL}, which in turn is based on the approach originally due to Lawvere \cite{LawAdj}. A relevant difference with respect to \cite{PittsCL} is that we will deal with doctrines which serve as a semantic for classical calculi, whereas in \cite{PittsCL} the author focuses on the intuitionistic case. We give the definition of boolean doctrines and we introduce the notion of Epsilon doctrines. We conclude the section by showing that every Epsilon doctrine provides a categorical semantic for the typed Epsilon calculus presented in section \ref{sec2}. In section \ref{sec4} we define Epsilon categories, and we prove that every Epsilon category gives rise straightforwardly to an Epsilon doctrine. In the last section we consider Epsilon calculi with types constructors and we prove that there exist only trivial Epsilon doctrines that soundly interpret an Epsilon calculus with empty type.

\section{Typed Epsilon Calculus}\label{sec2}
Given a many typed signature $Sg$ and a denumerable set of variables, we shall denote types with capital letters ($A$, $B$, $A_1$, $A_2$\dots), while variables will be denoted with lower case latin letters ($x$, $y$, $x_1$, $x_2$\dots). Contexts, i.e finite lists of typed variables $(x_1:A_1, x_2:A_2,\dots,x_n:A_n)$, will be denoted by capital greek letters, while lower case greek letters will be used to denote formulas. Given two contexts $\Gamma$ and $\Theta$, we shall denote by $\Gamma,\Theta$ the concatenation of contexts.\\
\\
Term will be written in context, then we use the notation$$\Gamma \mid t:A$$
to express that $t$ is a well formed term of type $A$ in the context $\Gamma$. Similarly for formulas we will write $$\Gamma \mid \phi$$ to express that $\phi$ is a well formed formulas in the context $\Gamma$. Thus a sequent will take the form $$\Gamma\mid \phi_1,\dots,\phi_n\vdash\psi$$
We say that we have a calculus over a signature $Sg$ whenever $Sg$ is a many typed signature together with the standard rules of equational logic and structural rules of natural deduction. We refer the reader to \cite{PittsCL,Pra} for a detailed account of such rules.\\\\
A classical first order calculus over $Sg$ is a calculs over $Sg$ together with the standard formation, introduction and elimination rules of natural deduction for each one of the following propositional connectives $\wedge$, $\lor$, $\imply$, $\neg$, quantifiers $\exists, \forall$ and constants $\top$, $\bot$, where the rule of excluded middle is assumed to hold \cite{PittsCL,Pra}.
\begin{deff}\label{epsi}
Given a many typed signature $Sg$, an Epsilon Calculus over $Sg$ is a classical first order calculus over $Sg$ with additionally the following two rules
\[
\AxiomC{$\Gamma, x:A \mid \psi$}
\RightLabel{$\ee$-form}
\UnaryInfC{$\Gamma \mid \ee_\psi:A $}
\DisplayProof\ \ \ \ \ \ 
\AxiomC{$\Gamma, x:A \mid \psi$}
\RightLabel{$\ee$-I}
\UnaryInfC{$\Gamma \mid \exists x:A.\psi\vdash\psi[\ee_\psi/x]$}
\DisplayProof
\]
\end{deff}
The rule $\ee$-form ensures that for every formula $\Gamma, x:A\mid \psi$, there exists a term $\Gamma\mid\ee_\psi:A$ such that the substitution $\Gamma\mid\psi[\ee_\psi/x]$ is derivable. The rule $\ee$-I captures the Hilbert's idea that the substitution with Epsilon terms serves as existential quantification. In fact the equivalence $\Gamma\mid\psi(\ee_\psi)\dashv\vdash \exists x:A.\psi$ it is easily derivable.

\section{Preliminaries on categories}\label{sec1}
In this section we recall some standard definitions and known facts concerning categories. The reader who is not familiar with them is referred to \cite{elephant}.\\\\
In a given a category $\C$, for every object $A$, the collection of morphisms with codomain $A$ is canonically preordered by factorization, i.e. for morphisms $m:X\arr A $ and $n:Y\arr A$ we have that $m\le n$ whenever there exists $p:X\arr Y$ such that $np = m$. The same relation preorders also the collection of monomorphisms with codomain $A$. We shall denote the poset reflection of the latter by $\sub_\C(A)$.\\\\
If $\C$ is a finitely complete category then for every object $A$ of $\C$, the poset $\sub_\C(A)$ is an infsemillatice: for monomorphisms $m$ and $n$ with codomain $A$, the meet of $n$ and $m$ is represented by the pullback of $m$ along $n$, while the top element is represented by $id_A$. We shall denote the pullback of an arrow $g$ along an arrow $f$, both having the same codomain, by $f^*g$. Given a morphism $f:A\arr B$, pullbacking along $f$ gives raise to a functor $$f^*:\sub_\C(B)\arr\sub_\C(A)$$which is an infsemilattice homomorphism.\\\\
Recall that an epimorphism $e:A\arr B$ is regular if there exists a pair of parallel morphisms $f,g:X\arr Y$ such that the following
\[
\xymatrix{
X\ar@< 2pt>[r]^-{f}\ar@<-2pt>[r]_-{g}&A\ar[r]^-{e}&B
}
\]
is a coequalizer diagram.
\begin{deff}
A category $\C$ is regular, if
\begin{itemize}
\item[i)]$\C$ is finitely complete
\item[ii)] the class of regular epimorphisms and the class of monomorphisms form a factorization system. 
\item[iii)]regular epimorphisms are preserved by pullbacks
\end{itemize}
\end{deff}
By ii) we have that for every arrow $f:A\arr B$ there exists a regular epimorphism $e:A\arr Y$ and a monomorphism $m:Y\arr B$ that make the following diagram commute
\[
\xymatrix{
A\ar[rr]^-{f}\ar[rd]_-{e}&&B\\
&Y\ar[ru]_-{m}&
}
\]
If $\C$ is a regular category, then for every projection $\pi:X\times Y\arr Y$, the functor $\pi^*:\sub_\C(Y)\arr\sub_\C(X\times Y)$ has a left adjoint $\Sigma_\pi$ natural in $Y$. The condition of naturality, also called Beck Chevalley condition, is equivalent to require that for every morphism $f:Z\arr Y$ in $\C$, the following diagram commutes
\[
\xymatrix{
X\times Z\ar[r]^-{\Sigma_{\pi'}}&Z\\
X\times Y\ar[u]^-{(id_X\times f)^*}\ar[r]_-{\Sigma_\pi}&Y\ar[u]_-{f^*}
}
\]
where $\pi'$ is the projection $X\times Z\arr Z$ and $id_X\times f$ is the induced arrow from $X\times Z$ to $X\times Y$.\\\\ For a monomorphism $k$ with codomain $X\times Y$ we have that $\Sigma_\pi k$ is represented by the monomorphism in any factorization of $\pi k$.
\begin{deff}\label{coh}
A regular category $\C$ is coherent if for every object $X$ of $\C$ the infsemilattice $\sub_\C(X)$ has finite joins which are stable under pullback.
\end{deff}
We recall below the formulation of the Axiom of Choice and the Law of Excluded Middle internal to a category $\C$, we shall denote them by AC and LEM respectively.\\\\
AC: every epimorphism of $\C$ has a section. Which is to say that for every epimorphism $e:X\arr Y$ there exists a morphism $s_e:Y\arr X$ such that $es_e = id_Y$.\\\\
LEM: for every monomorphism $m:X\arr Y$ of $\C$, there exists a monomorphism $\neg m: \neg X\arr Y$, such that the domain of $m^*\neg m$ is an initial object and
\[
\xymatrix{
X\ar[r]^-{m}&Y&\neg X\ar[l]_-{\neg m}
}
\]
is a coproduct diagram. Which is to say that for every two arrows $f:X\arr Z$ and $g:\neg X\arr Z$ there exists a unique arrow $[f,g]:Y\arr Z$ with $[f,g]m = f$ and $[f,g]\neg m = g$.
\begin{deff}
A boolean category is a coherent category satisfying LEM.
\end{deff}
\begin{exa}\label{disco}By the known argument of Diaconescu \cite{diaco}, in every elementary topos LEM is a consequence of AC. Therefore every elementary topos satisfying AC is a boolean category\cite{elephant}.\end{exa}
If $\C$ is a boolean category, then for every object $A$ in $\C$, the infsemilattice $\sub_\C(A)$ is a boolean algebra. Moreover for every morphism $f:A\arr B$ the functor $f^*$ is a homomorphism of boolean algebras.

\section{Doctrines}\label{sec3}
In order to give a categorical semantic of an Epsilon Calculus, we will use the notion of doctrines. The definition of doctrine is based on the definition of hyperdoctrine given in \cite{PittsCL}, which in turn is based on the one originated by Lawvere in \cite{LawAdj}. Our notation and terminology are slightly different from those used in \cite{PittsCL}, where doctrines are called $prop$-$categories$.\\\\
We denote by \textbf{Boole} the category of Boolean algebras and homomorphisms between them. 
\begin{deff}\label{pippo}
A boolean doctrine is a pair $(\C,P)$ such that
\begin{itemize}
\item[i)]$\C$ is a category with finite products
\item[ii)] $P$ is a functor $P:\C^{op}\arr \textbf{Boole}$
\item[iii)] for every projection arrow $\pi:X\times Y\arr Y$, the homomorphism $P(\pi)$ has a left adjoint $\Sigma_\pi$ satisfying Beck Chevalley condition.
\end{itemize}
\end{deff}
For every object $A$ in $\C$, we shall denote by $\le$ the order over $P(A)$.
\begin{exa}\label{exa1} If $\C$ is a boolean category, then $(\C,\sub_\C)$ is a boolean doctrine, see section \ref{sec1}. Moreover if $\mathcal{C}$ is a full subcategory of $\C$ closed under finite products, then also $(\mathcal{C},\sub_\C)$ is a boolean doctrine.
\end{exa}
The following important proposition is due to Lawvere. We state it without giving a proof, which can be found in \cite{PittsCL}.
\begin{prop}\label{semi}
A classical first order calculus over a signature $Sg$ can be soundly interpreted in a boolean doctrine.
\end{prop}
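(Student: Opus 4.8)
The plan is to interpret the syntax of a classical first order calculus over $Sg$ in an arbitrary boolean doctrine $(\C,P)$ by structural recursion, and then to check that every rule of the calculus is validated by the interpretation. First I would fix, for each basic type of $Sg$, an object of $\C$, and for each function symbol, respectively relation symbol, a morphism, respectively an element of the appropriate fibre. A context $\Gamma=(x_1:A_1,\dots,x_n:A_n)$ is then sent to the product $\lb\Gamma\rb:=\lb A_1\rb\times\cdots\times\lb A_n\rb$ (the terminal object when $\Gamma$ is empty); a term $\Gamma\mid t:A$ to a morphism $\lb t\rb:\lb\Gamma\rb\arr\lb A\rb$ assembled from the interpreting morphisms, the projections and composition; and a formula $\Gamma\mid\phi$ to an element $\lb\phi\rb\in P(\lb\Gamma\rb)$. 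The connectives and constants $\wedge,\lor,\imply,\neg,\top,\bot$ are interpreted via the boolean algebra operations of $P(\lb\Gamma\rb)$; the existential quantifier $\exists x:A.\phi$ as $\Sigma_\pi(\lb\phi\rb)$ for $\pi:\lb\Gamma\rb\times\lb A\rb\arr\lb\Gamma\rb$ the projection; and the universal quantifier $\forall x:A.\phi$ as $\neg\,\Sigma_\pi(\neg\lb\phi\rb)$. A sequent $\Gamma\mid\phi_1,\dots,\phi_n\vdash\psi$ is declared valid when $\lb\phi_1\rb\wedge\cdots\wedge\lb\phi_n\rb\le\lb\psi\rb$ in $P(\lb\Gamma\rb)$.

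The key lemma, proved by induction on the structure of terms and formulas, is the substitution lemma: if a substitution of terms $\Gamma\mid t_i:B_i$ for the variables of $\Theta=(y_1:B_1,\dots,y_m:B_m)$ is interpreted by the induced morphism $\sigma:\lb\Gamma\rb\arr\lb\Theta\rb$, then $\lb\phi[\vec t/\vec y]\rb=P(\sigma)(\lb\phi\rb)$ for every $\Theta\mid\phi$, and similarly for terms. The connective cases use that $P(\sigma)$ is a homomorphism of boolean algebras, in particular that it commutes with $\neg$; the $\exists$ case uses the Beck Chevalley condition of Definition \ref{pippo}; and the $\forall$ case follows from the $\exists$ case together with commutation of $P(\sigma)$ with $\neg$. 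At this point I would also record that, in any boolean doctrine, $\Sigma_\pi$ and the derived operation $\neg\,\Sigma_\pi(\neg-)$ are respectively left and right adjoint to $P(\pi)$ and both satisfy Beck Chevalley, so that the quantifier rules can be handled uniformly.

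With the interpretation and the substitution lemma in place, soundness is verified rule by rule. The structural rules and the rules of equational logic are immediate from functoriality of $P$, from the fact that terms denote genuine morphisms, and from the substitution lemma. The introduction and elimination rules for $\wedge,\lor,\imply,\neg,\top,\bot$, together with excluded middle, reduce to inequalities valid in every boolean algebra. The rules for $\exists$ translate into the two halves of the adjunction $\Sigma_\pi\dashv P(\pi)$, while the side condition on the substituted variable, via the substitution lemma, becomes an instance of Beck Chevalley; dually for $\forall$ one uses $P(\pi)\dashv\neg\,\Sigma_\pi(\neg-)$.

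I expect the main obstacle to be bookkeeping rather than conceptual: making the interpretation genuinely well defined on raw syntax — contexts being associative only up to canonical isomorphism, variables needing to be matched with the correct projections, and weakening and exchange needing a coherent interpretation — and then proving the substitution lemma carefully enough that every quantifier step is literally an instance of Beck Chevalley. Once these points are settled, the verification of the remaining rules is routine, which is presumably why the statement is quoted from \cite{PittsCL} rather than proved here.
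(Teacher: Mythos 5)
Your proposal is correct and is essentially the argument the paper intends: the paper states this proposition without proof, deferring to \cite{PittsCL}, and the interpretation clauses it does record afterwards (substitution via $P(\langle\lb t_1\rb,\dots,\lb t_n\rb\rangle)$ and $\exists$ via $\Sigma_\pi$) are exactly the ones you set up. Your handling of $\forall$ as the derived right adjoint $\neg\,\Sigma_\pi(\neg-)$, the substitution lemma with Beck--Chevalley, and the rule-by-rule reduction to boolean-algebra inequalities and the adjunction $\Sigma_\pi\dashv P(\pi)$ is the standard Lawvere--Pitts soundness proof, so nothing further is needed.
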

The two features of the interpretation mentioned in proposition \ref{semi} that we will mainly concerned with are the semantic of substitution and quantification. We briefly recall them below.\\\\
The mentioned interpretation is provided by giving an object $\lb A \rb$ of $\C$ for every type of $Sg$ so that if $\Gamma$ is the context $(x_1:A_1,\dots,x_n:A_n)$, then $\lb \Gamma \rb$ is the product $\lb A_1\rb \times\dots\times\lb A_n\rb$. Up to an appropriate assignment of function symbols and relation symbols, the interpretation of a term $\Gamma\mid t:A$ is a morphism $\lb t\rb:\lb \Gamma\rb\arr\lb A\rb$ of $\C$, while the interpretation of a well formed formula $\Gamma\mid\phi$ is an element $\lb \phi\rb$ of $P(\lb \Gamma\rb)$. Then we have:\\\\
Substitution\begin{itemize}
\item[] given terms $\Gamma\mid t_i:A_i$, for $i=1,\dots,n$, and a formula $x_1:A_1,\dots,x_n:A_n\mid\phi$
$$\lb\phi(\vec{t}/\vec{x})\rb =P(<\lb t_1\rb,\dots,\lb t_n\rb>)(\lb \phi\rb)$$
\end{itemize}
Quantification
\begin{itemize}
\item[] given a formula $\Gamma,x:A\mid\phi$
$$\lb\exists x:A. \phi\rb = \Sigma_\pi\lb\phi\rb$$for $\pi$ the projection $\pi:\lb\Gamma\rb\times\lb A\rb\arr\lb\Gamma\rb$
\end{itemize}
\begin{deff}\label{gianna}
An Epsilon doctrine is a boolean doctrine $(\C,P)$ such that for every projection $\pi: X\times Y\arr X$ and every element $\psi$ in $P(X\times Y)$ there exists a morphism $\ee_\psi:X\arr Y$ in $\C$ such that $$\Sigma_\pi\psi\le P(<id_X,\ee_\psi>)(\psi)$$
\end{deff}
\begin{prop}\label{esound}
An Epsilon calculus over a many typed signature $Sg$ can be soundly interpreted in an Epsilon doctrine.
\end{prop}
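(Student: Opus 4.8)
The plan is to start from the interpretation of the underlying classical first order calculus supplied by Proposition \ref{semi} and to extend it so as to cover the two extra rules of Definition \ref{epsi}. Fix an Epsilon doctrine $(\C,P)$. Types and function and relation symbols of $Sg$ are interpreted exactly as in Proposition \ref{semi}; the interpretations $\lb t\rb$ of terms $\Gamma\mid t:A$ and $\lb\phi\rb$ of formulas $\Gamma\mid\phi$ are then defined by a single recursion treating terms and formulas simultaneously (this is needed because epsilon terms are built from formulas), the only new clause being that, for a well formed formula $\Gamma,x:A\mid\psi$, so that $\lb\psi\rb\in P(\lb\Gamma\rb\times\lb A\rb)$, one sets
$$\lb\ee_\psi\rb:=\ee_{\lb\psi\rb}:\lb\Gamma\rb\arr\lb A\rb,$$
where $\ee_{\lb\psi\rb}$ is the morphism granted by Definition \ref{gianna} for the projection $\pi:\lb\Gamma\rb\times\lb A\rb\arr\lb\Gamma\rb$ and the element $\lb\psi\rb$. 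Since $\ee_{\lb\psi\rb}$ has domain $\lb\Gamma\rb$, this is consistent with the side condition that $x$ does not occur free in $\ee_\psi$, and it immediately makes the rule $\ee$-form sound: $\lb\ee_\psi\rb$ is then a legitimate interpretation of a term $\Gamma\mid\ee_\psi:A$.

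Assuming that the extended interpretation still satisfies the substitution and quantification clauses recalled after Proposition \ref{semi}, soundness of $\ee$-I is a one-line computation. For $\Gamma,x:A\mid\psi$ and $\pi:\lb\Gamma\rb\times\lb A\rb\arr\lb\Gamma\rb$ the quantification clause gives $\lb\exists x:A.\psi\rb=\Sigma_\pi\lb\psi\rb$, while the substitution clause, applied to the substitution that replaces $x$ by the term $\Gamma\mid\ee_\psi:A$ and keeps the variables of $\Gamma$ fixed, gives
$$\lb\psi[\ee_\psi/x]\rb=P(<id_{\lb\Gamma\rb},\lb\ee_\psi\rb>)(\lb\psi\rb)=P(<id_{\lb\Gamma\rb},\ee_{\lb\psi\rb}>)(\lb\psi\rb).$$
Hence the sequent $\Gamma\mid\exists x:A.\psi\vdash\psi[\ee_\psi/x]$ holds in the model exactly when $\Sigma_\pi\lb\psi\rb\le P(<id_{\lb\Gamma\rb},\ee_{\lb\psi\rb}>)(\lb\psi\rb)$ in $P(\lb\Gamma\rb)$, which is precisely the defining inequality of Definition \ref{gianna}. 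Since Proposition \ref{semi} already validates all the equational, structural, propositional and quantifier rules, it follows that every derivable sequent of the Epsilon calculus is validated, i.e. the interpretation is sound.

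What is left --- and what I expect to be the main obstacle --- is to justify the two facts used above: that the assignment $\lb-\rb$ extended by $\lb\ee_\psi\rb:=\ee_{\lb\psi\rb}$ is well defined, and that it still obeys the substitution clause once terms may contain epsilon subterms. The first is a routine induction on the size of expressions, interpreting terms and formulas at once: the body $\psi$ of an epsilon subterm $\ee_\psi$ of a term $t$ is strictly smaller than $t$, so the recursion is well founded. The substitution clause is established by the same simultaneous induction; all cases but that of the new term former are as in the proof of Proposition \ref{semi}, and the new case --- an epsilon subterm under a substitution acting on the variables of its ambient context --- is where one must reconcile the chosen witnessing morphism $\ee_{\lb\psi\rb}$ with the action of $P$ on morphisms. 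Carrying this out, i.e. establishing the substitution lemma for epsilon terms, is the delicate part of the argument; it is the analogue, for the new term former, of the substitution lemma that underlies Proposition \ref{semi}.
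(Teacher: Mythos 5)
Your proposal is correct and follows essentially the same route as the paper: the paper's entire proof consists of setting $\lb\ee_\psi\rb=\ee_{\lb\psi\rb}$, the morphism supplied by Definition \ref{gianna}, with the soundness of $\ee$-I being exactly the defining inequality $\Sigma_\pi\lb\psi\rb\le P(<id_{\lb\Gamma\rb},\ee_{\lb\psi\rb}>)(\lb\psi\rb)$ that you spell out. You are in fact more careful than the paper, which leaves both the verification of $\ee$-I and the substitution lemma for epsilon subterms entirely implicit.
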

\begin{proof} Suppose $(\C,P)$ is an Epsilon doctrine. Under the interpretation above, for a formula $\Gamma,x:A\mid\psi$ we have that $\lb\psi\rb$ is an element of $P(\lb\Gamma\rb\times\lb A\rb)$. Then there exists a morphism $\ee_{\lb\psi\rb}: \lb\Gamma\rb\arr\lb A\rb$. Define $\lb\ee_\psi\rb =\ee_{\lb\psi\rb}$.\end{proof}

\section{Epsilon Categories}\label{sec4}
In this section we define a class of categories which we shall call Epsilon categories. We will show that every Epsilon category gives rise to a epsilon doctrine.
\begin{deff}
An Epsilon category is a boolean category satisfying AC.
\end{deff}
\begin{exa}By \ref{disco}, an elementary topos verifying AC is an epsilon category.\end{exa}
In a category $\C$ with a terminal object $1$ an object $A$ is pointed if there exists a morphism $a:1\arr A$. We will denote by $\C_0$ the full subcategory of $\C$ on pointed objects.
\begin{lem}\label{pro} If $\C$ is a category with finite products, then $\C_0$ has finite products, moreover every projection $\pi$ in $\C_0$ has a section $s_\pi$. 
\end{lem}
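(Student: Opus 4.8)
The plan is to verify the two assertions of the lemma separately: first that $\C_0$ is closed under the finite products of $\C$ (so that these products serve as finite products in $\C_0$), and then that the projections in $\C_0$ admit sections. For the product structure, the terminal object $1$ of $\C$ is pointed by $id_1:1\arr 1$, hence lies in $\C_0$, and it is plainly still terminal in the subcategory. For binary products, given pointed objects $A$ and $B$ with points $a:1\arr A$ and $b:1\arr B$, I would note that the product $A\times B$ taken in $\C$ is pointed by $<a,b>:1\arr A\times B$, so $A\times B$ is an object of $\C_0$. Since $\C_0$ is a \emph{full} subcategory, any pair of morphisms $f:C\arr A$, $g:C\arr B$ with $C$ in $\C_0$ is in particular a pair of morphisms of $\C$, the induced map $<f,g>:C\arr A\times B$ is a morphism of $\C_0$, and its uniqueness in $\C$ gives uniqueness in $\C_0$. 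Hence $A\times B$ with its projections is a product of $A$ and $B$ in $\C_0$, and combining this with the terminal object we conclude that $\C_0$ has finite products.

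For the second assertion, let $\pi:A\times B\arr A$ be a projection in $\C_0$; then $A$, $B$ and $A\times B$ all lie in $\C_0$, so $B$ carries a point $b:1\arr B$. Writing $!_A:A\arr 1$ for the unique morphism to the terminal object, I would set $s_\pi := <id_A,\ b\circ !_A>:A\arr A\times B$. By the defining equation of the pairing, $\pi\circ s_\pi = id_A$, so $s_\pi$ is a section of $\pi$; and since $A$ and $A\times B$ are objects of the full subcategory $\C_0$, the arrow $s_\pi$ is a morphism of $\C_0$. The case of the projection $A\times B\arr B$ is symmetric.

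I do not expect a genuine obstacle in this argument; the only place where pointedness (rather than mere fullness) is used is precisely where it must be, namely to know that $A\times B$ stays inside $\C_0$ and to produce the point of $B$ that defines the section. Everything else is the routine transfer of a universal property along the full inclusion $\C_0\hookrightarrow\C$.
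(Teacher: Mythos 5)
Your proof is correct and uses exactly the paper's construction for the section, namely $s_\pi = <id_A,\, b\circ !_A>$ built from a point $b:1\arr B$. The only difference is that you also spell out the routine verification that $\C_0$ is closed under the finite products of $\C$ (the terminal object pointed by $id_1$, the product $A\times B$ pointed by $<a,b>$), which the paper leaves implicit; this is a welcome completion rather than a different approach.
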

\begin{proof}
If we have a morphism $b:1\arr B$ and $\pi:A\times B\arr A$ is a projection, then $s_\pi$ is
\[
\xymatrix{
A\ar[rr]^-{<id_A, b!_A>}&&A\times B
}
\]
where $!_A:A\arr 1$ is the unique arrow from $A$ to $1$.
\end{proof}
\begin{prop}\label{esound}
If $\C$ is an epsilon category, then $(\C_0, \textbf{Sub}_\C)$ is an epsilon doctrine.
\end{prop}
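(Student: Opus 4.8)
The plan is to read the boolean doctrine structure of $(\C_0,\sub_\C)$ almost directly off Example~\ref{exa1}, and then to build the Epsilon terms by using AC to pick witnesses over the image of a subobject together with booleanness --- and the points of the objects of $\C_0$ --- to promote such a partial choice to an honest morphism of $\C$. For the boolean doctrine part: $\C_0$ is full in $\C$ and closed under finite products, since the terminal object $1$ is pointed by $id_1$ and a product $A\times B$ of pointed objects $a:1\arr A$, $b:1\arr B$ is pointed by $\langle a,b\rangle$ --- this is essentially Lemma~\ref{pro}. Hence by Example~\ref{exa1} $(\C_0,\sub_\C)$ is a boolean doctrine, and I record for later use that a projection $\pi:X\times Y\arr X$ of $\C_0$ is a projection of $\C$, that for an object $A$ of $\C_0$ the boolean algebra $\sub_\C(A)$ is the one computed in $\C$, and that the resulting left adjoint $\Sigma_\pi$ is the one of $\C$: applied to a monomorphism $k$ it is represented by the monomorphism in any factorization of $\pi k$ (section~\ref{sec1}).

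Next I would construct, for a projection $\pi:X\times Y\arr X$ of $\C_0$ and $\psi\in\sub_\C(X\times Y)$, a morphism $\ee_\psi:X\arr Y$ witnessing the inequality of Definition~\ref{gianna}. Represent $\psi$ by a monomorphism $m:S\arr X\times Y$, write $\pi_Y:X\times Y\arr Y$ for the complementary projection, and factor $\pi m$ as a regular epimorphism $e:S\arr I$ followed by a monomorphism $i:I\arr X$, so that $\Sigma_\pi\psi$ is represented by $i$. By AC the epimorphism $e$ has a section $s:I\arr S$, so $es=id_I$; by LEM the monomorphism $i$ has a complement $\neg i:\neg I\arr X$ with $I\xrightarrow{\,i\,}X\xleftarrow{\,\neg i\,}\neg I$ a coproduct diagram. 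As $Y$ is pointed, fix a point $y:1\arr Y$ and set
$$\ee_\psi:=[\,\pi_Y m s\,,\ y\,!_{\neg I}\,]:X\arr Y,$$
which is a morphism of $\C$ and hence of $\C_0$ by fullness, and which satisfies $\ee_\psi\circ i=\pi_Y m s$ by the coproduct property.

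It then remains to check $\Sigma_\pi\psi\le\sub_\C(\langle id_X,\ee_\psi\rangle)(\psi)$, that is, that $i$ factors through the pullback $\langle id_X,\ee_\psi\rangle^*m$, equivalently that $\langle id_X,\ee_\psi\rangle\circ i$ factors through $m$; and using $\pi m s=i\,es=i$ together with $\pi_Y m s=\ee_\psi\circ i$ one gets
$$m\,s=\langle\,\pi m s\,,\ \pi_Y m s\,\rangle=\langle\,i\,,\ \ee_\psi\circ i\,\rangle=\langle id_X,\ee_\psi\rangle\circ i,$$
so the section $s$ exhibits the required factorization. The one genuinely delicate point is that $\ee_\psi$ must be a \emph{total} morphism $X\arr Y$: AC only supplies the choice map $\pi_Y m s$ on the image $I\subseteq X$, and since in general $i$ is not split and $I$ is not pointed, one needs booleanness to decompose $X$ as $I\sqcup\neg I$ and a point of $Y$ to define $\ee_\psi$ (arbitrarily) on $\neg I$ --- which is precisely why the statement concerns $\C_0$ rather than $\C$. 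Everything else is routine transport of structure along the full inclusion $\C_0\hookrightarrow\C$.
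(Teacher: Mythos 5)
Your proposal is correct and takes essentially the same route as the paper: image-factor $\pi m$, use AC to split the regular epi, use LEM to decompose $X$ as $I\sqcup\neg I$, send $\neg I$ to a point of $Y$ (the paper writes this same map as $\pi'[\psi s_e,\neg m s_\pi]$ with $s_\pi=\langle id_X,y!_X\rangle$, which agrees with your $[\pi_Y m s, y!_{\neg I}]$), and verify the inequality by the identical computation $\langle id_X,\ee_\psi\rangle i = ms$.
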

\begin{proof}From lemma \ref{pro} we know that $\C_0$ has finite products and since $\C$ is a boolean category we know from \ref{exa1} that $(\C_0, \sub_\C)$ is a boolean doctrine.\\\\ It remains to prove that for every two objects $X$ and $Y$ in $\C_0$ and for every monomorphism $\psi:A\arr X\times Y$ in $\C$, there exists a morphism $\ee_\psi:X\arr Y$ such that $$\Sigma_\pi \psi \le <id_X,\ee_\psi>^*\psi$$
for $\pi:X\times Y\arr X$ the first projection.\\\\Recall from section \ref{sec1} that $\Sigma_\pi\psi$ is represented by the monomorphism in the factorization of $\pi\psi$ as in the diagram below
\[
\xymatrix{
A\ar[rd]_-{e}\ar[rr]^{\pi\psi}&&X\\
&I\ar[ru]_-{m}&
}
\]
By AC, the regular epimorphism $e$ has a section $s_e$, moreover by lemma \ref{pro}, since $X$ and $Y$ belongs to $\C_0$, also $\pi:X\times Y\arr X$ has a section $s_\pi$. Now consider the following diagram
\[
\xymatrix{
&&Y&&\\
\neg I\ar[rr]^-{\neg ms_\pi}\ar@/_1.2pc/[rrd]_-{\neg m}&&X\times Y\ar[u]^-{\pi'}&&I\ar@/^1.2pc/[dll]^-{m}\ar[ll]_-{\psi s_e}\\
&&X\ar[u]_-{[ \psi s_e,\neg ms_\pi]}&&
}
\]
where by LEM the object $X$ is the coproduct of $I$ and $\neg I$ with canonical injections $m$ and $\neg m$, while $\pi'$ denotes the second projection. Define $$\ee_\psi = \pi'[\psi s_e,\neg ms_\pi]$$
and consider the diagram
\[
\xymatrix{
I\ar@/_1.2pc/[ddr]_-{m}\ar@/^1.2pc/[rrrd]^-{s_e}&&&\\
&P\ar[d]_-{p}\ar[rr]&&A\ar[d]^-{\psi}\\
&X\ar[rr]_-{<id_X,\ee_\psi>}&&X\times Y
}
\]
where the innermost square is a pullback of $\psi$ along $<id_X,\ee_\psi>$. Thus, by the universal property of pullbacks,  to prove that $m$ factors through $p$, and therefore that $\Sigma_\pi\psi \le <id_X,\ee_\psi>^*\psi$, it is enough to show that the outermost square commutes, which is true since
\begin{equation}\notag
\begin{split}
<id_X,\ee_\psi>m &= <m,\ee_\psi m> \\
& = <mes_e,\pi' [\psi s_e,\neg ms_\pi] m>\\
& = <\pi\psi s_e,\pi' \psi s_e>\\
&= \psi s_e
\end{split}
\end{equation}
\end{proof}
Thus every Epsilon category $\C$ provides an Epsilon doctrine $(\C_0,\sub\C)$. Moreover if we suppose that for every $A$ and every subobject $s$ over $A$ we have a choice of a representative, i.e. a monomorphism $c_s:X\arr A$ such that $[c_s] = s$, then the following rule
\[
\AxiomC{$\Gamma, x:A \mid \psi\vdash\phi$}
\AxiomC{$\Gamma, x:A \mid \phi\vdash\psi$}
\RightLabel{$\ee$-ex}
\BinaryInfC{$\Gamma \mid \ee_\psi = \ee_\phi:A $}
\DisplayProof\]
can be soundly interpreted in $(\C_0,\sub_\C)$, by defining $\lb\ee_\psi\rb =\ee_{c_{\lb\psi\rb}}$.\\\\The rule $\ee$-ex asserts that the epsilon terms that correspond to equivalent formulas are equal. If we add $\ee$-ex the the Epsilon calculus we obtain the typed version of extensional  Hilbert's Epsilon calculus presented in \cite{HB2}.\\\\
Note also that, in order to prove that $(\C,\sub\C)$ is an Epsilon doctrine for $\C$ an Epsilon category, we made use of AC. The following proposition shows that this was necessary.
\begin{prop}\label{ciccio} If $(\C, \sub_\C)$ is an Epsilon doctrine, then AC holds in $\C$.
\end{prop}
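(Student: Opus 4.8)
The plan is to verify AC directly: given an arbitrary epimorphism $e : A \arr B$ of $\C$, I will exhibit a section for it, by feeding the \emph{graph} of $e$ into the $\ee$-structure. First note that $\langle e, id_A\rangle : A \arr B\times A$ is a monomorphism, being split by the second projection $\mathrm{pr}_A$; write $\psi \in \sub_\C(B\times A)$ for the subobject it represents. Applying Definition \ref{gianna} to $\psi$ and the first projection $\pi : B\times A \arr B$ yields a morphism $\ee_\psi : B \arr A$ with $\Sigma_\pi\psi \le \langle id_B,\ee_\psi\rangle^*\psi$ in $\sub_\C(B)$, and the claim will be that $\ee_\psi$ is a section of $e$.

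I would then unwind this inequality in two complementary ways. Let $j : E \arr B$ be a monomorphism representing $\langle id_B,\ee_\psi\rangle^*\psi$ (equivalently, $j$ represents the equalizer of $id_B$ and $e\circ\ee_\psi$). On one hand, the pullback square witnessing $[j]=\langle id_B,\ee_\psi\rangle^*\psi$ supplies a map $h : E \arr A$ with $\langle e, id_A\rangle\circ h = \langle id_B,\ee_\psi\rangle\circ j$; composing with the two projections of $B\times A$ gives $e\circ h = j$ and $h = \ee_\psi\circ j$, hence $e\circ\ee_\psi\circ j = j$. On the other hand, by the adjunction $\Sigma_\pi\dashv\pi^*$ the inequality above is equivalent to $\psi \le \pi^*[j]$; since $\pi$ is a projection, $\pi^*[j]$ is represented by $j\times id_A : E\times A \arr B\times A$, so $\langle e, id_A\rangle$ factors through $j\times id_A$, and composing this factorization with $\pi$ (using $\pi\circ(j\times id_A)=j\circ\mathrm{pr}_E$) shows that $e$ factors through $j$, say $e = j\circ v$. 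Combining the two halves, $e\circ\ee_\psi\circ e = (e\circ\ee_\psi\circ j)\circ v = j\circ v = e$, and since $e$ is epic this forces $e\circ\ee_\psi = id_B$; hence AC holds in $\C$.

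The step I expect to be the real obstacle is conceptual rather than computational. One is tempted to argue that $\Sigma_\pi\psi$ is the top element of $\sub_\C(B)$ — which is exactly what epicness of $e$ buys in a \emph{regular} category, where $\Sigma_\pi$ computes images — but this can fail for a bare Epsilon doctrine, in which $\Sigma_\pi$ is only a formal left adjoint to $\pi^*$. The resolution is that one does not need it: because the graph $\langle e, id_A\rangle$ is a \emph{split} monomorphism, the factorization of $\langle e, id_A\rangle$ through $j\times id_A$ descends to a factorization of $e$ through $j$, after which epicness of $e$ alone closes the argument, with no regularity or balancedness assumption on $\C$.
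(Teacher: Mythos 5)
Your proof is correct, and it opens with the same move as the paper: feed the graph of the given epimorphism, as a subobject of $B\times A$, into the Epsilon structure for the first projection. The difference lies in how the inequality $\Sigma_\pi\psi\le\langle id_B,\ee_\psi\rangle^*\psi$ is exploited. The paper first asserts that $\Sigma_\pi\psi$ is the top element of $\sub_\C(B)$ because $e$ is epic; the pullback subobject $\langle id_B,\ee_\psi\rangle^*\psi$ is then also the top, so its representing mono $j$ is split by some $k$, and composing $k$ with the other pullback leg gives the section directly. You instead never evaluate $\Sigma_\pi\psi$: you transpose the inequality across the adjunction to get $\psi\le\pi^*[j]$, extract $e=jv$ from that factorization, combine it with $e\circ\ee_\psi\circ j=j$ (read off the pullback square), and cancel the epimorphism $e$ at the very end. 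Both computations land in the same place, but your detour buys something real, and your closing paragraph puts its finger on exactly the right point: the step ``$\Sigma_\pi$ of the graph is top'' is immediate when $\Sigma_\pi$ is computed by image factorization in a regular category, but for a bare Epsilon doctrine of the form $(\C,\sub_\C)$ the adjunction of Definition \ref{gianna} only yields that a mono representing $\Sigma_\pi\psi$ is an epi mono, and $\C$ is not assumed balanced. Your version uses only the unit of the adjunction and right-cancellation of $e$, so it is the more robust rendering of the same underlying idea.
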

\begin{proof}
Suppose $f:X\arr Y$ is an epimorphism, and consider the subobject represented by $<f,id_Y>:X\arr Y\times X$. Since $f$ is an epimorphism, $\Sigma_\pi <f,id_Y>$ is represented by $id_Y$. By assumption there exists a morphism $k$ making the following commute
\[
\xymatrix{
Y\ar[dr]_-{id_Y}\ar[r]^-{k}&P\ar[d]\ar[rr]^-{s}&&X\ar[d]^-{<f,id_Y>}\\
&Y\ar[rr]_-{<id_Y, \ee_{<f,id_Y>}>}&&Y\times X
}
\]
where the right square is a pullback. Thus $sk$ is a section of $f$.\end{proof}

\section{Epsilon Calculus with types constructors}\label{sec5}
In this section we consider the case in which the underling signature of an Epsilon calculus has types constructors. In particular we will deal with product types, function types, sum types and the empty type. For the rules concerning each types constructor we refer the reader to \cite{PittsCL}.\\\\ After the work of Lambek and Scott we know that a first order calculus over a signature with product types and functions types, can be soundly interpreted in a doctrine $(\C,P)$ where $\C$ is cartesian closed \cite{LamScott, PittsCL}. On the other hand, a first order calculus over a signature with sum types, can be soundly interpreted in a doctrine $(\C,P)$ where $\C$ has binary coproducts \cite{PittsCL}.\\\\If $\C$ is cartesian closed, its full subcategory $\C_0$ on pointed objects is cartesian closed: if $B$ is in $\C_0$, i.e there exists $b:1\arr B$, then for every object $A$ we have that the exponential transpose of
\[
\xymatrix{
1\times A\ar[r]^-{\pi}&1\ar[r]^{b}&B
}
\]
points $B^A$. Moreover if $B$ is pointed, $A+B$ is pointed by the arrow $i_Bb:1\arr A+B$, where $i_B:B\arr A+B$ is the canonical injection.
Thus if $\C$ is a cartesian closed epsilon category with binary coproducts, such as every elementary topos satisfying AC, then $(\C_0, \sub_\C)$ is an Epsilon doctrine which soundly interprets an  Epsilon calculus over a signature with product types, function types and sum types.\\\\
As one might expect, the case of the empty type is not as straightforward as the cases considered above. In fact, under the interpretation given in section \ref{sec3}, there are no Epsilon doctrines which soundly interpret an Epsilon calculus with empty type, except trivial ones, as specified by the following proposition 
\begin{prop}
Let $Sg$ be a signature with empty type. If an Epsilon calculus over $Sg$ can be soundly interpreted in an Epsilon doctrine $(\C,P)$, then every object of $\C$ is terminal. 
\end{prop}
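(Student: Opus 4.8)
The plan is to show that the empty type is necessarily interpreted as a \emph{strict} initial object of $\C$, and that the Epsilon structure then forces every object of $\C$ to be isomorphic to it. Throughout, write $0$ for the empty type and $E := \lb 0 \rb$ for its interpretation.

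First I would check that $E$ is a strict initial object, i.e.\ that it is initial and that every morphism of $\C$ with codomain $E$ is an isomorphism. The elimination rule for $0$, soundly interpreted, already makes $E$ weakly initial, since it supplies a morphism $E \arr \lb B \rb$ for every type $B$. The decisive extra input is the uniqueness/$\eta$-rule for $0$: it forces that, in any context in which a term of type $0$ is available, every term equals the one obtained from the eliminator. Semantically this says that whenever an object $\lb\Gamma\rb$ admits a morphism $t : \lb\Gamma\rb \arr E$, the identity of $\lb\Gamma\rb$ factors through $t$ (coordinatewise, via the eliminators) and, conversely, that factorization followed by $t$ is $id_E$; hence $t$ is an isomorphism. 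That is precisely strict initiality.

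Next I would observe that every object $X$ of $\C$ admits a morphism $X \arr E$. This is immediate from the definition of an Epsilon doctrine (Definition~\ref{gianna}): instantiating it at the projection $\pi : X \times E \arr X$ and at any element $\psi$ of the Boolean algebra $P(X \times E)$ — which is nonempty — produces a morphism $\ee_\psi : X \arr E$. (Alternatively, applying $\ee$-form to the judgement $y : B,\, z : 0 \mid \top$ gives, for every type $B$, a term $y : B \mid \ee_\top : 0$ and hence a morphism $\lb B \rb \arr E$.)

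Combining the two steps finishes the argument. For every object $X$, the morphism $X \arr E$ of the second step is an isomorphism by the first step, so $X \iso E$; thus every object of $\C$ is (isomorphic to) a strict initial object, and in particular is initial. But in a category in which every object is initial, every object is also terminal: given objects $A$ and $B$, initiality of $A$ yields a unique morphism $A \arr B$, and letting $A$ range over all objects exhibits $B$ as terminal. Hence every object of $\C$ is terminal. I expect the only delicate point to be the first step — extracting strict, rather than merely weak, initiality of $\lb 0 \rb$ from the sound interpretation of the rules for the empty type, which genuinely uses the uniqueness rule (a weaker empty type would not suffice); the remainder is formal.
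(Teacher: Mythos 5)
Your argument is correct and takes essentially the same route as the paper: both proofs rest on the empty type being interpreted as a stable (equivalently, given finite products, strict) initial object $0$, together with the Epsilon-doctrine axiom supplying a morphism into $0$ that collapses the category. The only cosmetic difference is that the paper instantiates the axiom once, at the projection $1\times 0\arr 1$, to get $\ee_\top:1\arr 0$ and then propagates via $A\simeq 1\times A\simeq 0\times A\simeq 0\simeq 1$, whereas you instantiate it at every object $X$ to get $X\arr 0$ and invoke strictness directly.
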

\begin{proof}
Since $Sg$ has an empty type, then $\C$ has a stable initial object 0 \cite{PittsCL}. Moreover for the top element $\top$ in $P(1\times 0)$ there exists in $\C$ the arrow $\ee_\top:1\arr 0$ which is necessarily an isomorphism. Then $A \simeq 1\times A \simeq 0\times A \simeq 0\simeq 1$ for every object $A$ in $\C$.
\end{proof}

\end{document}